\newtheorem{theorem}{Theorem}[section]
\newtheorem{corollary}[theorem]{Corollary}
\newtheorem{definition}[theorem]{Definition}
\newtheorem{lemma}[theorem]{Lemma}
\newtheorem{problem}[theorem]{Problem}
\newenvironment{proof}[1][Proof]{\noindent\textbf{#1.} }{\ \rule{0.5em}{0.5em}}
\begin{document}

\title{Intersections of subcomplexes in non-positively curved 2-dimensional
complexes}
\author{Feng Ji and Shengkui Ye}
\maketitle

\begin{abstract}
Let $X$ be a contractible $2$-complex which is a union of two contractible
subcomplexes $Y$ and $Z.$ Is the intersection $Y\cap Z$ contractible as
well? In this note, we prove that the inclusion-induced map $\pi _{1}(Y\cap
Z)\rightarrow \pi _{1}(Z)$ is injective if $Y$ is $\pi _{1}$-injective
subcomplex in a locally CAT(0) 2-complex $X$. In particular, each component
in the intersection of two contractible subcomplexes in a CAT(0) 2-complex
is contractible.
\end{abstract}

\section{Introduction}

As a motivation, we consider the following problem.

\begin{problem}
\label{prob1}Let $X$ be a contractible $2$-complex which is a union of two
contractible subcomplexes $Y$ and $Z.$ Is the intersection $Y\cap Z$
contractible as well?
\end{problem}

A higher-dimensional version of this problem is already studied by Begle 
\cite{b}, which is related to the work of Aronszajn and Borsuk \cite{bb}.
Begle \cite{b} constructs a 3-dimensional contractible simplicial complex $%
X=Y\cup Z$ whose subcomplexes $Y,Z$ are both contractible but the
intersection $Y\cap Z$ is not simply connected. He left open the question as
to whether or not there are similar counter-examples in dimension two. We
study a more general problem as the following.

\begin{problem}
\label{prob}Let $X$ be a 2-dimensional aspherical simplicial complex (i.e.
the universal cover $\tilde{X}$ is contractible) and $Y$ be any $\pi _{1}$%
-injective subcomplex. For any subcomplex $Z\subset X,$ is the map $\pi
_{1}(Z\cap Y)\rightarrow \pi _{1}(Z)$ induced by the inclusion injective?
\end{problem}

Note that any subcomplex $Y$ of a contractible $2$-complex $X$ has vanishing
second homology group $H_{2}(Y;\mathbb{Z})=0$, by considering the long exact
sequence of homology groups for the pair $(X,Y)$. When $X$ is contractible
and $Z\subset X$ is also contractible, the triviality of $\pi _{1}(Z\cap Y)$
would imply that each connected component in $Z\cap Y$ is contractible by
the Whitehead theorem. This shows that a positive answer to Problem \ref%
{prob} gives a positive answer to Problem \ref{prob1}. We define a
subcomplex $Y$ of a 2-dimensional complex $X$ to be \emph{strongly }$\pi
_{1} $\emph{-injective} if for any subcomplex $Z$ of $X,$ the
inclusion-induced map $\pi _{1}(Y\cap Z)\rightarrow \pi _{1}(Z)$ is
injective (cf. Definition \ref{def}). The 2-complex $X$ is said to have
strong $\pi _{1}$-injectivity if any $\pi _{1}$-injective subcomplex $Y$ is 
\emph{strongly} $\pi _{1}$-injective.

We will give a positive answer to Problem \ref{prob} for locally $\mathrm{CAT%
}(0)$ 2-complexes by showing that locally $\mathrm{CAT}(0)$ 2-complexes have
strong $\pi _{1}$-injectivity, as the following.

\begin{theorem}
\label{th1.3}\bigskip Let $X$ be a proper nonpositively curved 2-complex and 
$Y$ a $\pi _{1}$-injective subcomplex. For any subcomplex $Z,$ the inclusion
induces an injection $\pi _{1}(Z\cap Y)\rightarrow \pi _{1}(Z).$ In other
words, $X$ has strong $\pi _{1}$-injectivity.
\end{theorem}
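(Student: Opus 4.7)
The strategy is to lift to the universal cover $p:\widetilde{X}\to X$, which is $\mathrm{CAT}(0)$ by Cartan--Hadamard, and to build the required null-homotopy there. Let $\alpha$ be a loop in $Y\cap Z$ that is null-homotopic in $Z$. Then $\alpha$ is null-homotopic in $X$, and the $\pi_1$-injectivity of $Y$ forces $\alpha$ to be null-homotopic in $Y$ as well. Fix a lift $\widetilde{\alpha}\subset\widetilde{X}$; it is necessarily a loop. Let $\widetilde{Y}_0$ and $\widetilde{Z}_0$ denote the connected components of $p^{-1}(Y)$ and $p^{-1}(Z)$ containing $\widetilde{\alpha}$. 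By $\pi_1$-injectivity, $\widetilde{Y}_0$ is a universal cover of $Y$, hence simply connected. Lifting the given null-homotopy in $Z$ yields a cellular disk diagram $f\colon\Delta\to\widetilde{Z}_0$ with $f(\partial\Delta)=\widetilde{\alpha}$. It suffices to produce any disk diagram for $\widetilde{\alpha}$ with image in $\widetilde{Y}_0\cap\widetilde{Z}_0$, since pushing it down by $p$ then gives a null-homotopy of $\alpha$ in $Y\cap Z$.

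The plan for the main step is an area-minimization argument. Choose $f$ of minimal area among cellular disk diagrams in $\widetilde{Z}_0$ bounding $\widetilde{\alpha}$, and consider the subcomplex $\Delta_Y:=f^{-1}(\widetilde{Y}_0)\subseteq\Delta$, which contains $\partial\Delta$. Assume for contradiction that $\Delta_Y\neq\Delta$, and let $C$ be a connected component of $\Delta\setminus\Delta_Y$. Its frontier maps to a (union of) loop(s) in $\widetilde{Y}_0\cap\widetilde{Z}_0$, which is null-homotopic in $\widetilde{Z}_0$ via $f|_{\overline{C}}$, and also null-homotopic in $\widetilde{Y}_0$ because the latter is simply connected. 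Substituting $f|_{\overline{C}}$ by a disk diagram in $\widetilde{Y}_0\cap\widetilde{Z}_0$ of strictly smaller area contradicts minimality and forces $\Delta_Y=\Delta$, completing the proof.

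The main obstacle, and the step I expect to require the most care, is producing this substitute \emph{inside the intersection} and not merely in $\widetilde{Y}_0$ or in $\widetilde{Z}_0$. The natural attempt is to glue $f|_{\overline{C}}$ to a minimal-area disk $g$ for $f(\partial C)$ in $\widetilde{Y}_0$ along their common boundary, thereby forming a sphere diagram in $\widetilde{X}$, and then to apply combinatorial Gauss--Bonnet in the nonpositively curved $2$-complex $\widetilde{X}$: every interior vertex of the sphere has non-positive link curvature, so positive total curvature $4\pi$ can only be accounted for by the sphere collapsing onto a tree-like object. The delicate technical point is to show that this collapse identifies $g$ with $f|_{\overline{C}}$ as cellular maps into $\widetilde{X}$, thereby placing the image of $g$ inside $\widetilde{Z}_0$ and hence in $\widetilde{Y}_0\cap\widetilde{Z}_0$. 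Making this cancellation rigorous in the 2-dimensional $\mathrm{CAT}(0)$ setting---turning the global hypothesis of $\pi_1$-injectivity into sufficient local curvature control on the sphere diagram---is the technical heart of the argument.
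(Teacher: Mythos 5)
Your overall strategy differs substantially from the paper's: the authors reduce to the universal cover, then to \emph{finite} subcomplexes by a compactness argument that uses properness, and then invoke the fact that a finite $\mathrm{CAT}(0)$ 2-complex is collapsible together with an inductive lemma showing that strong $\pi_1$-injectivity is preserved under elementary extensions and collapses. You instead propose a minimal-area disk-diagram argument carried out directly in the $\mathrm{CAT}(0)$ universal cover. The preliminary reductions in your proposal (lifting $\alpha$, simple connectivity of $\widetilde{Y}_0$, existence of a diagram $f:\Delta\to\widetilde{Z}_0$, and the observation that it suffices to find a diagram with image in $\widetilde{Y}_0\cap\widetilde{Z}_0$) are sound.

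The gap is the substitution step, and it is not a removable technicality: producing a disk diagram for $f(\partial C)$ \emph{inside} $\widetilde{Y}_0\cap\widetilde{Z}_0$ is itself an instance of the theorem you are trying to prove (a loop lying in the intersection and null-homotopic in $\widetilde{Z}_0$ must be null-homotopic in the intersection), so as written the argument is circular. Your proposed repair --- gluing $f|_{\overline{C}}$ to a minimal diagram $g$ in $\widetilde{Y}_0$ to form a sphere and applying combinatorial Gauss--Bonnet --- only yields that the sphere diagram is reducible, i.e.\ contains at least one cancellable pair of faces. It does not, without substantial further work, show that every cancellable pair at every stage of the reduction is mixed (one face from each hemisphere), nor that the resulting matching identifies $f|_{\overline{C}}$ with $g$ as maps into $\widetilde{X}$; and the frontier of $C$ need not be a single embedded circle, so the glued object need not even be a sphere. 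You flag this yourself as ``the technical heart,'' but that heart is exactly where the entire difficulty of the theorem lives, so the proposal does not yet constitute a proof. Two further symptoms of the missing step: your argument never uses properness of $X$, which the paper's proof needs essentially, and even granting a substitute diagram in the intersection you would need its area to be strictly smaller than that of $f|_{\overline{C}}$ (or a secondary induction on the number of faces outside $\Delta_Y$) to derive the contradiction with minimality.
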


\begin{corollary}
Let $X$ be a $\mathrm{CAT}(0)$ 2-complex. For any two contractible
subcomplexes $Y,Z,$ each component in the intersection $Y\cap Z$ is
contractible.
\end{corollary}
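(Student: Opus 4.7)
The plan is to deduce the corollary as an almost immediate consequence of Theorem \ref{th1.3}, the only extra ingredients being the Hurewicz theorem, Whitehead's theorem, and the $H_2$-vanishing already recorded in the introduction.

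First I would check that the hypotheses of Theorem \ref{th1.3} are in place. A $\mathrm{CAT}(0)$ $2$-complex is in particular proper and locally $\mathrm{CAT}(0)$, and since $Y$ is contractible the inclusion $Y \hookrightarrow X$ is trivially $\pi_{1}$-injective (the domain has trivial $\pi_{1}$). Applying Theorem \ref{th1.3} with this $Y$ and the given subcomplex $Z$, the inclusion-induced map $\pi_{1}(Y\cap Z) \to \pi_{1}(Z)$ is injective, componentwise in $Y \cap Z$. Contractibility of $Z$ gives $\pi_{1}(Z) = 1$, so every connected component $C$ of $Y\cap Z$ is simply connected.

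Next I would promote simple connectivity to contractibility. Each such $C$ is itself a subcomplex of the contractible $2$-complex $X$, so the long exact sequence of the pair $(X,C)$ together with $H_{*}(X) = H_{*}(\mathrm{pt})$ yields $H_{2}(C;\mathbb{Z}) = 0$. By Hurewicz applied to the simply-connected $C$, $\pi_{2}(C) \cong H_{2}(C) = 0$; since $C$ is $2$-dimensional, $H_{n}(C) = 0$ for all $n \geq 3$ by dimension, and iterating Hurewicz gives $\pi_{n}(C) = 0$ for every $n$. Whitehead's theorem then shows that $C$ is contractible.

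There is essentially no obstacle at this stage: the real content of the corollary is absorbed into Theorem \ref{th1.3}, and the remainder is the routine algebraic-topology argument already anticipated in the introduction. The only point where I would be slightly careful is to keep the passage componentwise throughout, choosing a basepoint in each component of $Y\cap Z$ before invoking the $\pi_{1}$-injectivity conclusion.
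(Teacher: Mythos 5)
Your proof is correct and follows essentially the same route as the paper: contractibility of $Y$ makes it $\pi_{1}$-injective, Theorem \ref{th1.3} gives simple connectivity of each component of $Y\cap Z$, and the relative homology exact sequence for a subcomplex of the contractible ($2$-dimensional) $X$ plus Hurewicz and Whitehead upgrades this to contractibility, exactly as the paper does in the introduction and in the proof of Corollary \ref{lem3}. The only caveat is your assertion that a $\mathrm{CAT}(0)$ $2$-complex is automatically proper --- it need not be (e.g.\ if the complex is not locally finite), so properness must be carried as a hypothesis inherited from Theorem \ref{th1.3}; but the paper's own statement of the corollary elides this in the same way.
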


Theorem \ref{th1.3} leads to the following observation: when $X$ is a finite
collapsible 2-complex and both $Y$ and $Z$ are contractible, each component
in the intersection $Z\cap Y$ is contractible (cf. Corollary \ref{lem3}).
This is already known by Segev \cite{se} using a different approach.

\textit{Notation: } All complexes are assumed to be connected simplicial
complexes, unless otherwise stated. We use $\pi _{1}(X)$ to denote the
fundamental group of $X$ with a based point in a connected component.

\section{Strong $\protect\pi _{1}$-injectivity}

We first give the following definition.

\begin{definition}
\label{def}A subcomplex $Y$ of a 2-dimensional complex $X$ is \emph{strongly 
}$\pi _{1}$\emph{-injective} if for any subcomplex $Z$ of $X,$ the
inclusion-induced map $\pi _{1}(Y\cap Z)\rightarrow \pi _{1}(Z)$ is
injective. The 2-complex $X$ has strong $\pi _{1}$-injectivity if any $\pi
_{1}$-injective subcomplex $Y$ is \emph{strongly} $\pi _{1}$-injective.
\end{definition}

Not every $2$-complex has strong $\pi _{1}$-injectivity. For a simple
counter-example, let $X$ be the sphere $S^2$. Since $S^{2}$ is a union of
two disks with the circle $S^{1}$ as the intersection, the upper disk in the
sphere $S^{2}$ is not strongly $\pi _{1}$-injective.

\begin{lemma}
\label{lemt}Any $\pi _{1}$-injective graph (i.e. $1$-simplicial subcomplex)
is strongly $\pi _{1}$-injective in any $2$-complex.
\end{lemma}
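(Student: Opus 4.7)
The plan is to reduce the statement to the classical fact that the inclusion of a connected subgraph into a connected graph is $\pi_1$-injective, and then chain this together with the hypothesis that $Y$ is $\pi_1$-injective in $X$.

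First, I would fix a basepoint $x_0$ in the component $C$ of $Y\cap Z$ on which we compute $\pi_1$. Note $C$ is a connected subgraph of $Y$ (it lies inside the $1$-complex $Y$), and let $Y_0$ denote the component of $Y$ containing $x_0$. Suppose $[\gamma]\in \pi_1(C,x_0)$ is sent to the trivial element under the inclusion-induced map into $\pi_1(Z,x_0)$. Then $\gamma$ is null-homotopic in $Z$, and hence null-homotopic in $X$ via the further inclusion $Z\hookrightarrow X$.

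Next, I would invoke the hypothesis that $Y$ is $\pi_1$-injective in $X$. Since $\gamma$ lies in $C\subset Y_0$ and is null-homotopic in $X$, $\pi_1$-injectivity of $Y_0\hookrightarrow X$ forces $[\gamma]$ to be trivial already in $\pi_1(Y_0,x_0)$.

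Finally, I would use the graph-theoretic fact that the inclusion of any connected subgraph $C$ into a connected graph $Y_0$ induces an injection on $\pi_1$: one picks a spanning tree $T_C$ of $C$ and extends it to a spanning tree $T$ of $Y_0$ (possible because $T_C$ is a forest in $Y_0$). Then $T\cap C=T_C$, so the edges of $C$ outside $T_C$ form a subset of the edges of $Y_0$ outside $T$, which identifies the standard free basis of $\pi_1(C,x_0)$ with a subset of a free basis of $\pi_1(Y_0,x_0)$. Applying this to the previous paragraph yields $[\gamma]=e$ in $\pi_1(C,x_0)$, giving the desired injectivity.

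The whole argument is essentially a one-line chain of implications, so there is no real obstacle beyond cleanly articulating the subgraph $\pi_1$-injectivity step (the only place where we really use that $Y$ is $1$-dimensional) and being careful about basepoints and components, since $Y\cap Z$ need not be connected even when $Y$ and $Z$ are.
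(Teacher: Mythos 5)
Your proof is correct and follows essentially the same route as the paper: both arguments reduce to the injectivity of $\pi_{1}(Y\cap Z)\rightarrow \pi_{1}(Y)$ for a subgraph of a graph, composed with the hypothesis that $\pi_{1}(Y)\rightarrow \pi_{1}(X)$ is injective, and then observe that this forces injectivity into $\pi_{1}(Z)$. The only difference is that you spell out the spanning-tree extension argument for the subgraph step (and the bookkeeping of components and basepoints), which the paper asserts more tersely.
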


\begin{proof}
Let $X$ be a 2-complex with a $\pi _{1}$-injective $1$-dimensional
subcomplex $Y.$ Shrinking a contractible tree in $Y,$ we see that the
fundamental group of $Y$ is free. For a subcomplex $K\subset Y,$ the
fundamental group of $K$ is still free. If there is a non-nullhomotopic
closed loop in $K,$ the loop represents a nontrivial element in $Y.$ This
implies that the composite $\pi _{1}(K)\rightarrow \pi _{1}(Y)\rightarrow
\pi _{1}(X)$ is injective and thus $Y$ is strongly $\pi _{1}$-injective.
\end{proof}

Next, we study the relation between strong $\pi_1$-injectivity and taking
covering space.

\begin{lemma}
\label{lem1}Let $X$ be a 2-complex with a $\pi _{1}$-injective subcomplex $%
K. $ Suppose that $p:\tilde{X}\rightarrow X$ is the universal cover$.$ If $%
p^{-1}(K)$ is strongly $\pi _{1}$-injective in $\tilde{X},$ then the complex 
$K$ is strongly $\pi _{1}$-injective in $X.$
\end{lemma}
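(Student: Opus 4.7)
The plan is to transfer the desired injectivity from the universal cover down to $X$ by lifting loops and nullhomotopies along $p$. Fix an arbitrary subcomplex $Z\subset X$, a component $C$ of $K\cap Z$, a basepoint $x_0\in C$, and a loop $\gamma$ based at $x_0$ in $C$ that becomes nullhomotopic in the component of $Z$ containing $C$; the goal is to produce a nullhomotopy of $\gamma$ inside $C$.

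First I would lift $\gamma$ to the cover. Because $\gamma$ is nullhomotopic in $Z\subset X$, any lift $\tilde\gamma$ starting at a chosen preimage $\tilde x_0\in p^{-1}(x_0)$ is automatically a closed loop in $\tilde X$, and it lies in $p^{-1}(K\cap Z)=p^{-1}(K)\cap p^{-1}(Z)$. A nullhomotopy $D^2\to Z$ of $\gamma$ lifts through $p$ by simple connectivity of $D^2$; the lift lands in $p^{-1}(Z)$ and shows that $\tilde\gamma$ is nullhomotopic in $p^{-1}(Z)$.

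Next I would invoke the hypothesis with the subcomplex $p^{-1}(Z)\subset\tilde X$: strong $\pi_1$-injectivity of $p^{-1}(K)$ says that $\pi_1\bigl(p^{-1}(K)\cap p^{-1}(Z)\bigr)\to\pi_1\bigl(p^{-1}(Z)\bigr)$ is injective, so $\tilde\gamma$ is nullhomotopic in its component $\tilde C$ of $p^{-1}(K\cap Z)$. Composing that nullhomotopy with $p$ yields a map $D^2\to K\cap Z$ whose image is connected and contains $x_0$; it therefore lies in $C$ and nullhomotopes $\gamma$ there, as required.

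The main obstacle is bookkeeping with connected components and basepoints: one has to verify that $\tilde C$ actually projects into the chosen component $C$, which follows from the standard fact that $p$ restricts to a covering map $p^{-1}(A)\to A$ for any subcomplex $A\subset X$, and that the lifted and projected nullhomotopies each stay in a single component (automatic because $D^2$ is connected). The $\pi_1$-injectivity of $K$ in $X$ is not used in an essential way in the argument, but it is the natural setting in which this lemma is to be applied later.
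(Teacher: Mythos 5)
Your proof is correct and follows essentially the same route as the paper's: lift the loop and its nullhomotopy to the universal cover, apply the strong $\pi_1$-injectivity of $p^{-1}(K)$ to the subcomplex $p^{-1}(Z)$, and project back down. The paper phrases this as a proof by contradiction (using that $\pi_1(p^{-1}(Z))\rightarrow \pi_1(Z)$ is injective to descend), but the content is the same, and your observation that the $\pi_1$-injectivity of $K$ itself is not needed in this step --- only in the lemma's later applications --- is accurate.
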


\begin{proof}
Suppose that there is a subcomplex $Z$ in $X$ such that $\pi _{1}(Z\cap
K)\rightarrow \pi _{1}(Z)$ is not injective. Let $f:S^{1}\rightarrow Z\cap K$
be a map whose homotopic class in $\pi _{1}(Z\cap K)$ is nontrivial, but
trivial in $\pi _{1}(Z).$ The map $f$ has a lifting $f^{\prime
}:S^{1}\rightarrow \tilde{X},$ since $[f]=1\in \pi _{1}(X).$ Moreover, $%
[f^{\prime }]\neq 1\in \pi _{1}(p^{-1}(K\cap Z),\ast )$ for the base point
in any connected component of $p^{-1}(K\cap Z).$ Since $\pi
_{1}(K)\rightarrow \pi _{1}(X)$ is injective, the complex (each connected
component) $p^{-1}(K)$ is simply connected. By assumption, the induced map $%
\pi _{1}(p^{-1}(K)\cap p^{-1}(Z))\rightarrow \pi _{1}(p^{-1}(Z))$ is
injective. Therefore, the homotopy class $[f^{\prime }]\neq 1\in \pi
_{1}(p^{-1}(Z)).$ This is a contradiction, since $\pi
_{1}(p^{-1}(Z))\rightarrow \pi _{1}(Z)$ is injective.
\end{proof}

Lemma \ref{lem1} implies that a 2-complex $X$ has strong $\pi _{1}$%
-injectivity if its universal cover $\tilde{X}$ does.

Let $X$ be a 2-complex and $K$ be a closed triangle (2-simplex). The
2-complex $X\cup K$ obtained by identifying two edges of $K$ with those of $%
X $ is called an elementary extension of $X,$ while $X$ is called an
elementary collapse of $X\cup K$ (cf. \cite{Bo}). Denote by $e$ the third
edge of $K,$ which is not in $X.$ A 2-complex $X$ is called collapsible if $%
X $ could be deformed to be a point by finite steps of elementary
extensions, collapses and contracting or adding free edges.

\begin{center}
\begin{equation*}
\FRAME{itbpF}{2.0928in}{1.9571in}{0in}{}{}{Figure}{\special{language
"Scientific Word";type "GRAPHIC";maintain-aspect-ratio TRUE;display
"USEDEF";valid_file "T";width 2.0928in;height 1.9571in;depth
0in;original-width 6.8476in;original-height 6.4031in;cropleft "0";croptop
"1";cropright "1";cropbottom "0";tempfilename
'PDMVFR00.bmp';tempfile-properties "XPR";}}
\end{equation*}

Figure 1. The elementary extension $X\cup K$
\end{center}

\begin{theorem}
\label{lem2}The 2-complex $X$ has strong $\pi _{1}$-injectivity if and only
if so does an elementary extension $X\cup K.$
\end{theorem}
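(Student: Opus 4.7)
The plan is to prove both directions of the biconditional. The backward direction is essentially immediate: collapsing the $2$-cell $K$ and the free edge $e$ onto the path $e_{1}\cup e_{2}$ exhibits $X\hookrightarrow X\cup K$ as a homotopy equivalence, hence an isomorphism on $\pi_{1}$; any $\pi_{1}$-injective subcomplex $Y\subset X$ is therefore $\pi_{1}$-injective in $X\cup K$, and the intersection $Y\cap Z$ with any $Z\subset X$ is unchanged whether viewed in $X$ or in $X\cup K$, so strong $\pi_{1}$-injectivity transfers from $X\cup K$ to $X$.

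For the forward direction, I would classify each subcomplex $W\subset X\cup K$ by its interaction with the added cells into three types: (I) $W\subset X$; (II) $e\subset W$ but $K\not\subset W$, so $W=W_{0}\cup e$ with $W_{0}:=W\cap X$; or (III) $K\subset W$, so $W=W_{0}\cup K$ with $W_{0}\supset e_{1}\cup e_{2}$ and $W$ deformation retracts onto $W_{0}$. In types (I) and (III), $\pi_{1}(W)=\pi_{1}(W_{0})$; in type (II), $\pi_{1}(W_{0})\hookrightarrow \pi_{1}(W)$ since attaching a $1$-cell cannot kill loops. Combined with the fact that $X\hookrightarrow X\cup K$ is a $\pi_{1}$-isomorphism, this gives: if $W$ is $\pi_{1}$-injective in $X\cup K$, then $W_{0}$ is $\pi_{1}$-injective in $X$.

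Given $Y$ $\pi_{1}$-injective in $X\cup K$ and $Z$ arbitrary, I would perform case analysis on the nine pairs of types. A direct computation shows $Y\cap Z$ is always one of $Y_{0}\cap Z_{0}$, $(Y_{0}\cap Z_{0})\cup e$, or $(Y_{0}\cap Z_{0})\cup K$. Six of the nine cases reduce directly to strong $\pi_{1}$-injectivity of $X$ applied to $(Y_{0},Z_{0})$, possibly composed with the injection $\pi_{1}(Z_{0})\hookrightarrow \pi_{1}(Z_{0}\cup e)$ or with the isomorphism $\pi_{1}(Z_{0}\cup K)\cong \pi_{1}(Z_{0})$ coming from the deformation retraction.

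The remaining cases (II,II), (III,II), and (II,III) require more work. For the first two, I would isolate and apply an \textbf{edge-addition lemma}: if $A\subset B$ are subcomplexes with $\pi_{1}(A)\hookrightarrow \pi_{1}(B)$ and $e$ is a $1$-simplex with endpoints in $A$, then $\pi_{1}(A\cup e)\hookrightarrow \pi_{1}(B\cup e)$; this follows from a component analysis using van Kampen to express both groups as free products with an added $\mathbb{Z}$ factor, the induced map being an injection of free products after a change of generators (normal form theorem). The main obstacle is the case (II,III), where the intersection $(Y_{0}\cap Z_{0})\cup e$ maps into a target $Z_{0}\cup K$ whose deformation retraction collapses the edge $e$ to the path $p_{K}=e_{1}\cdot e_{2}$; here the $\pi_{1}$-injectivity of $Y=Y_{0}\cup e$ in $X\cup K$ forces a free-product-style relation in $\pi_{1}(X)$ between $\pi_{1}(Y_{0})$ and the loop $e\cdot p_{K}^{-1}$, and I would combine this relation with strong $\pi_{1}$-injectivity of $X$ applied to $(Y_{0},Z_{0})$ to conclude injectivity of the reduced map $\pi_{1}((Y_{0}\cap Z_{0})\cup e)\to \pi_{1}(Z_{0})$. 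Verifying this last step is the technical heart of the proof.
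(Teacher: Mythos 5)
Your overall architecture matches the paper's: the backward direction via the $\pi_1$-isomorphism $\pi_1(X)\cong\pi_1(X\cup K)$, and the forward direction via a $3\times 3$ case analysis according to whether each of $Y$ and $Z$ contains $K$, contains $e$ but not $K$, or lies in $X$. Your edge-addition lemma cleanly disposes of the cases where $Y\cap Z$ acquires $e$ while the target $Z$ also retains $e$ as a genuine free edge (your (II,II) and (III,II)); this is if anything tidier than the paper's treatment, which routes those cases through the attaching path $F$ as well.

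The gap is in case (II,III), which you yourself flag as ``the technical heart'' and do not carry out --- and the combination you sketch for it does not suffice. There you must show that $\pi_1\bigl((Y_0\cap Z_0)\cup e\bigr)\rightarrow\pi_1(Z_0)$ is injective, where $e$ is sent to the path $F$ of attaching edges (since $Z\supset K$ deformation retracts onto $Z_0=Z\backslash K$). Concretely, when both ends of $e$ lie in one component of $Y_0\cap Z_0$, you need the subgroup of $\pi_1(Z_0)$ generated by $\pi_1(Y_0\cap Z_0)$ and the class $w=[F\cdot P^{-1}]$ to be their free product. The two facts you propose to combine are: (a) $\pi_1(Y_0\cap Z_0)\hookrightarrow\pi_1(Z_0)$, from strong $\pi_1$-injectivity of $X$ applied to $(Y_0,Z_0)$; and (b) the free-product relation between $\pi_1(Y_0)$ and $w$ forced in $\pi_1(X)$ by the $\pi_1$-injectivity of $Y=Y_0\cup e$. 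But (b) lives in $\pi_1(X)$, and neither $\pi_1(Z_0)\to\pi_1(X)$ nor $\pi_1(Y_0\cap Z_0)\to\pi_1(Y_0)$ need be injective, since $Z$ is arbitrary. So an alternating word such as $h\,w\,h^{-1}w^{-1}$ with $1\neq h\in\pi_1(Y_0\cap Z_0)$ but $h\mapsto 1$ in $\pi_1(Y_0)$ dies in $\pi_1(X)$, and (a) together with (b) gives you no way to see that it survives in $\pi_1(Z_0)$. The paper's fix is to stay inside subcomplexes of $X$: replace $e$ by $F$ to form $Y\backslash e\cup F\subset X$, observe that it is $\pi_1$-injective in $X$ (using that $F\not\subset Y$, itself a consequence of the $\pi_1$-injectivity of $Y$, so that $\pi_1(Y\backslash e\cup F)\cong\pi_1(Y)$ with the same image in $\pi_1(X)$), and then apply the strong $\pi_1$-injectivity of $X$ to the modified pair $(Y\backslash e\cup F,\,Z\backslash K)$, whose intersection contains $(Y_0\cap Z_0)\cup F$. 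Some application of the inductive hypothesis to such a modified pair --- not to $(Y_0,Z_0)$ --- is needed to close this case.
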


\begin{proof}
Suppose that the elementary extension $X\cup K$ has strong $\pi _{1}$%
-injectivity. For any subcomplex $Y$ with injective fundamental group, we
see that 
\begin{equation*}
\pi _{1}(Y)\rightarrow \pi _{1}(X)\overset{\cong }{\rightarrow }\pi
_{1}(X\cup K)
\end{equation*}%
is injective as well. Therefore, for any subcomplex $Z\subset X,$ the map $%
\pi _{1}(Z\cap Y)\rightarrow \pi _{1}(Z)$ is injective.

Conversely, suppose that $X$ has strong $\pi _{1}$-injectivity. Let $%
Y\subset X\cup K$ be any $\pi _{1}$-injective subcomplex and $Z\subset X\cup
K$ any subcomplex. We divide the proof into several cases.

\begin{enumerate}
\item[Case 1] $Y\supset K.$

\item[1.1] $Z\supset K.$ For convenience, let $Y\backslash K$ denote the
subcomplex of $Y$ obtained by deleting the interior of $K$ and the open edge 
$e.$ We see that 
\begin{equation*}
Y\cap Z=(Y\backslash K\cap Z\backslash K)\cup K.
\end{equation*}%
Note that $Y$ is an elementary extension of $Y\backslash K$ and $Y\cap Z$ is
also an elementary extension of $Y\backslash K\cap Z\backslash K.$
Therefore, we get by the hypothesis on $X$ that 
\begin{equation*}
\pi _{1}(Y\cap Z)=\pi _{1}(Y\backslash K\cap Z\backslash K)\hookrightarrow
\pi _{1}(Z\backslash K)=\pi _{1}(Z).
\end{equation*}

\item[1.2] $Z\nsupseteqq K$ but $Z\supset e.$ Let $Z\backslash e$ denote the
complex obtained by removing the interior of $e$ from $Z$. We have that 
\begin{equation*}
Y\cap Z=(Y\backslash K\cap Z\backslash e)\cup e.
\end{equation*}%
If two ends of $e$ are both in the same component of $Y\backslash K\cap
Z\backslash e,$ let $P$ be a path in $Y\backslash K\cap Z\backslash e$
connecting the two ends. Choose a base point $x_{0}\in P.$ Contracting the
path $P,$ we have that (note the injectivity of the first free factor
follows from the hypothesis on $X$) 
\begin{equation*}
\pi _{1}(Y\cap Z,x_{0})=\pi _{1}(Y\backslash K\cap Z\backslash e,x_{0})\ast 
\mathbb{Z}\hookrightarrow \pi _{1}(Z\backslash e,x_{0})\ast \mathbb{Z}=\pi
_{1}(Z,x_{0}).
\end{equation*}%
If the two ends of $e$ lie in two different components $Y_{1},Y_{2}$ of $%
Y\backslash K\cap Z\backslash e,$ choose the path $F$ consisting of the two
attaching edges in $K.$ Note that $F$ is not in $Z$. Since $X$ has strong $%
\pi _{1}$-injectivity, there is an injection 
\begin{equation*}
\pi _{1}(Y\backslash K\cap (Z\backslash e\cup F),x_{0})\hookrightarrow \pi
_{1}(Z\backslash e\cup F,x_{0})
\end{equation*}%
where the base point $x_{0}\ $is one end of $e.$ Therefore, we have that%
\begin{eqnarray*}
\pi _{1}(Y\cap Z,x_{0}) &=&\pi _{1}(Y_{1})\ast \pi _{1}(Y_{2})=\pi
_{1}((Y\backslash K\cap Z\backslash e)\cup F,x_{0}) \\
&\hookrightarrow &\pi _{1}(Z\backslash e\cup F,x_{0})=\pi _{1}(Z,x_{0}).
\end{eqnarray*}

\item[1.3] $Z\subset X.$ We have that $Y\cap Z=Y\backslash K\cap Z$ and thus%
\begin{equation*}
\pi _{1}(Y\cap Z)=\pi _{1}(Y\backslash K\cap Z)\hookrightarrow \pi _{1}(Z).
\end{equation*}

\item[Case 2] $Y\nsupseteqq K$ but $Y\supset e,$ where $e$ is the closed
edge of $K$ not in $X.$

\item[2.1] $Z\supset K.$ We have that $Y\cap Z=(Y\backslash e\cap
Z\backslash K)\cup e.$ Since $\pi _{1}(Y)\rightarrow \pi _{1}(X\cup K)$ is
injective, the path $F$ consisting of the two attaching edges of $K$ does
not lie in $Y.$ If the two ends of $e$ lie in the same component of $%
Y\backslash e\cap Z\backslash K,$ the edge $e$ is a part of a loop in $Y\cap
Z.$ Then the path $F$ is part of a loop in $Y\backslash e\cap Z\backslash K$
by replacing $e$ with $F$. If the two ends of $e$ lie in different
components, then the edge $e$ will not contribute to the fundamental group.
In any case, we have an injection%
\begin{equation*}
\pi _{1}(Y\cap Z)\hookrightarrow \pi _{1}((Y\backslash e\cup F)\cap
Z\backslash K).
\end{equation*}%
Since $\pi _{1}(Y)=\pi _{1}(Y\backslash e\cup F)\hookrightarrow \pi
_{1}(X\cup K)=\pi _{1}(X),$ the subcomplex $Y\backslash e\cup F$ is also $%
\pi _{1}$-injective. Considering that $X$ has strong $\pi _{1}$-injectivity,
there is an injection 
\begin{equation*}
\pi _{1}((Y\backslash e\cup F)\cap Z\backslash K)\hookrightarrow \pi
_{1}(Z\backslash K)=\pi _{1}(Z).
\end{equation*}%
This proves that the inclusion induces an injection $\pi _{1}(Y\cap
Z)\hookrightarrow \pi _{1}(Z).$

\item[2.2] $Z\nsupseteqq K$ but $Z\supset e.$ Since $\pi _{1}(Y)\rightarrow
\pi _{1}(X\cup K)$ is injective, the path $F$ consisting of the two
attaching edges of $K$ does not lie in $Y.$ For the same reason as that of
the case 2.1, we have an injection%
\begin{eqnarray*}
\pi _{1}(Y\cap Z) &=&\pi _{1}((Y\backslash e\cap Z\backslash e)\cup
e)\hookrightarrow \pi _{1}((Y\backslash e\cap Z\backslash e)\cup F) \\
&=&\pi _{1}((Y\backslash e\cup F)\cap (Z\backslash e\cup F)).
\end{eqnarray*}%
Note that $\pi _{1}(Y\backslash e\cup F)=\pi _{1}(Y)\hookrightarrow \pi
_{1}(X\cup K)=\pi _{1}(X).$ Since $X$ has strong $\pi _{1}$-injectivity, the
inclusion induces an injection $\pi _{1}((Y\backslash e\cup F)\cap
(Z\backslash e\cup F))\hookrightarrow \pi _{1}(Z\backslash e\cup F).$ If $%
F\nsubseteqq Z,$ we have that $\pi _{1}(Z\backslash e\cup F)=\pi _{1}(Z).$
If $F\subset Z,$ we have that $\pi _{1}(Z\backslash e\cup F)\ast \mathbb{Z}%
=\pi _{1}(Z).$ In both cases, there is an injection $\pi _{1}(Z\backslash
e\cup F)\hookrightarrow \pi _{1}(Z).$ Therefore, the map $\pi _{1}(Y\cap
Z)\rightarrow \pi _{1}(Z)$ is injective.

\item[2.3] $Z\subset X.$ We have that $Y\cap Z=Y\backslash e\cap Z.$ For the
same reason as that of the case 2.1, the path $F$ is not in $Y$ and there is
an injection $\pi _{1}(Y\backslash e\cup F)\hookrightarrow \pi _{1}(X).$ We
have that%
\begin{equation*}
\pi _{1}(Y\cap Z)=\pi _{1}(Y\backslash e\cap Z)\hookrightarrow \pi
_{1}((Y\backslash e\cup F)\cap Z)\hookrightarrow \pi _{1}(Z).
\end{equation*}

\item[Case 3] $Y\subset X.$

\item[3.1] $Z\supset K.$ We have that $Y\cap Z=Y\cap (Z\backslash K).$ Since 
$Z\backslash K$ is a collapse of $Z,$ we get from the hypothesis on $X$ that 
\begin{equation*}
\pi _{1}(Y\cap Z)=\pi _{1}(Y\cap (Z\backslash K))\hookrightarrow \pi
_{1}(Z\backslash K)=\pi _{1}(Z).
\end{equation*}

\item[3.2] $Z\nsupseteqq K$ but $Z\supset e.$ In this case, $\pi _{1}(Z)=\pi
_{1}(Z\backslash e)\ast \mathbb{Z}.$ The hypothesis on $X$ implies that $\pi
_{1}(Y\cap Z)=\pi _{1}(Y\cap Z\backslash e)$ injects into $\pi
_{1}(Z\backslash e)$. Therefore, we have an injection 
\begin{equation*}
\pi _{1}(Y\cap Z)\hookrightarrow \pi _{1}(Z\backslash e)\hookrightarrow \pi
(Z\backslash e)\ast \mathbb{Z}=\pi _{1}(Z).
\end{equation*}

\item[3.3] $Z\subset X.$ This subcase follows directly from the hypothesis
of $X$.

All the cases are included and the proof is complete.
\end{enumerate}
\end{proof}

It is already known by Segev \cite{se} (4.3) that when $X$ is a finite
collapsible 2-complex and both $Y$ and $Z$ are contractible, each connected
component in the intersection $Z\cap Y$ is contractible. This is a special
case of the following.

\begin{corollary}
\label{lem3}A collapsible 2-complex has strong $\pi _{1}$-injectivity. In
particular, each connected component in the intersection $Y\cap Z$ of two
contractible subcomplexes $Y,Z$ in a collapsible 2-complex $X$ is
contractible.
\end{corollary}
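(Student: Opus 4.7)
The plan is to prove the first sentence by induction on the length of a sequence of collapsing moves that deforms $X$ to a point, and then to derive the second sentence as a direct consequence using the Whitehead-type argument already laid out in the introduction.

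For the base case, a single point trivially has strong $\pi_1$-injectivity; alternatively, by Lemma \ref{lemt}, any tree does. For the inductive step, each of the allowed collapsing moves should preserve strong $\pi_1$-injectivity. The elementary extension or collapse by a $2$-simplex is handled directly by Theorem \ref{lem2}, which is the engine of the whole argument. The only additional move to verify is the addition or contraction of a free edge. Writing $X' = X \cup e$ for a free edge $e = [v,w]$ attached at a vertex $v \in X$, and given any $\pi_1$-injective subcomplex $Y \subset X'$ together with any subcomplex $Z \subset X'$, I would split into cases according to which of $Y$ and $Z$ contain $e$. In each case the edge $e$ either is absent from $Y \cap Z$ (so the claim reduces immediately to the hypothesis on $X$) or contributes a free $\mathbb{Z}$ factor to $\pi_1(Z)$ matched by a corresponding free $\mathbb{Z}$ factor in $\pi_1(Y \cap Z)$. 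Since $e$ has a free endpoint, it never glues two components together or closes a new loop, so the argument is strictly simpler than the delicate subcases~1.2 and~2.1 that already appear in Theorem \ref{lem2}.

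For the second sentence, assume $Y$ and $Z$ are contractible. Then $\pi_1(Y) = 1$, so $Y$ is trivially $\pi_1$-injective in $X$; the first sentence now yields an injection $\pi_1(Y \cap Z) \hookrightarrow \pi_1(Z) = 1$, so every component of $Y \cap Z$ is simply connected. As observed in the introduction, any subcomplex of a contractible $2$-complex has vanishing second integral homology, so $H_2(Y \cap Z; \mathbb{Z}) = 0$, and Whitehead's theorem then upgrades each component of $Y \cap Z$ to contractibility.

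The main potential obstacle is the free-edge step, but, as indicated, it amounts to a simpler version of the case analysis already carried out in Theorem \ref{lem2}: the absence of a $2$-cell and the presence of a free endpoint remove essentially all of the delicate configurations, so no genuinely new idea is required beyond bookkeeping.
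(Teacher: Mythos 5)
Your proposal is correct and follows essentially the same route as the paper: induct on the sequence of collapsing moves, using Theorem \ref{lem2} for the elementary extension/collapse step, and then deduce the second sentence from $\pi_1(Y\cap Z)\hookrightarrow\pi_1(Z)=1$, vanishing of $H_2$ via the long exact sequence of the pair, and the Whitehead theorem. In fact you are more careful than the paper on one point: the paper's definition of collapsibility allows ``contracting or adding free edges,'' yet its proof of the corollary only invokes elementary collapses and extensions, whereas you explicitly address the free-edge move. One small correction to your sketch of that case: a free edge $e=[v,w]$ with $w$ free never contributes a $\mathbb{Z}$ factor to any fundamental group --- it deformation retracts onto $v$, so whenever $e$ survives into $Y\cap Z$ or $Z$ it changes nothing --- which is exactly why, as your following sentence correctly says, the case is trivial; the ``matched $\mathbb{Z}$ factors'' phrasing should simply be dropped.
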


\begin{proof}
A collapsible 2-complex is deformed to a point by a finitely many elementary
collapse or extensions. The first part is thus implied by Theorem \ref{lem2}%
. When $Y$ and $Z$ are contractible, the intersection $Y\cap Z$ is $\pi _{1}$%
-injective in $Z$ and thus simply connected. A simply connected subcomplex
of a contractible 2-complex is acyclic by the relative homology exact
sequence. Therefore, each connected component in the intersection $Y\cap Z$
is contractible by the Whitehead theorem.
\end{proof}

\section{Non-positively curved complexes}

Recall the notion of non-positively curved complexes from Bridson and
Haefliger \cite{bh} (Chapter II. 1.2). Let $(X,d)$ be a geodesic metric
space. A geodesic triangle $\Delta (x,y,z)$ consists of three vertices $%
x,y,z\in X$ and three geodesics $[x,y],[y,z],[x,z]$ connecting these
vertices. A comparison triangle $\Delta (\bar{x},\bar{y},\bar{z})$ (or
denoted by $\bar{\Delta}(x,y,z)$) is an Euclidean triangle in the plane $%
\mathbb{R}^{2}$ with three vertices $\bar{x},\bar{y},\bar{z}$ and edges of
lengths $d(x,y),d(y,z),d(x,z)$ respectively.

\begin{definition}
\label{cat}A geodesic metric space $X$ is CAT(0) if for any geodesic
triangle $\Delta (x,y,z)$ and any two points $p,q\in \Delta (x,y,z),$ we have%
\begin{equation*}
d(p,q)\leq d_{\mathbb{R}^{2}}(\bar{p},\bar{q}),
\end{equation*}%
where $\bar{p},\bar{q}$ are the corresponding points of $p,q$ in the
comparison triangle $\Delta (\bar{x},\bar{y},\bar{z}).$
\end{definition}

A Euclidean cell is the convex hull of a finite number of points in $\mathbb{%
R}^{n}$, equipped with the standard Euclidean metric. A Euclidean cell
complex $X$ is a space formed by gluing together Euclidean cell-complexes
via isometries of their faces. It has the piecewise Euclidean path metric.
Precisely, for any $x,y\in X,$ let $x=x_{0},x_{1},\cdots ,x_{n}=y$ be a path
such that each successive $x_{i},x_{i+1}$ is contained in a Euclidean
simplex $S_{i}.$ Define the distance (called path metric) $d_{X}(x,y)=\inf
\sum_{i=0}^{n-1}d_{S_{i}}(x_{i},x_{i+1}),$ where the infimum is taken over
all such paths. Note that a metric space $X$ is proper if any closed ball $%
B(x,r)\subset X$ is compact.

\begin{definition}
A Euclidean cell complex $X$ is non-positively curved if it is locally
CAT(0), i.e. for every $x\in X$ there exists $r_{x}>0$ such that the ball $%
B(x,r_{x})$ with the induced metric is a CAT(0).
\end{definition}

Let $X$ be a Euclidean cell complex and $v\in X.$ The (geometric) link $%
Lk(x,X)$ is the set of unit tangent vectors ("directions") at $x$ in $X.$
Precisely, let $S$ be the set of all geodesics $[x,y]$ with $y$ in a simplex
containing $x.$ Two geodesics are called equivalent if one is contained in
the other. The link $Lk(x,X)$ is the set of equivalence classes of geodesics
in $S.$ If $X$ is one $n$-dimensional Euclidean cell, the link $Lk(x,X)$ is
part of $S^{n-1}$ and thus the topology on $Lk(x,X)$ is defined as the
"angle" topology. In general, the topology on $Lk(x,X)$ is defined as the
path metric coming from each cell.

We will need the following facts about $2$-complexes from \cite{bh}.

\begin{lemma}
\label{key}(1) A finite $\mathrm{CAT}(0)$ 2-complex is collapsible.

(2) (Link condition) A $2$-dimensional Euclidean cell complex $X$ is
non-positively curved if and only if each link $Lk(x,X)$ contains no
injective loops of length less than $2\pi .$

(3) A simply connected non-positively curved complex is $\mathrm{CAT}(0).$
\end{lemma}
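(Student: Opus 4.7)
The three items are standard results collected from Bridson--Haefliger, so my plan is to recall the structure of the well-known arguments rather than invent new ones, and then flag which technical point I expect to consume the most care.

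For part (1), the plan is induction on the number of 2-simplices. The heart of the matter is to locate a \emph{free edge}, that is, an edge of some 2-cell that is contained in no other 2-cell. Once such an edge is found, one performs an elementary collapse of the corresponding 2-cell, removes any resulting free edges in the 1-skeleton, and observes that the resulting subcomplex inherits the induced metric of a finite CAT(0) 2-complex (being closed and convex in a CAT(0) space it remains CAT(0)); induction then finishes the job. The main obstacle is proving existence of a free edge: I would argue by contradiction using a combinatorial Gauss--Bonnet estimate on the link angles — if every edge were shared by at least two 2-cells, summing the angular defects over vertices (each link being a graph with no loop shorter than $2\pi$ by the link condition, which follows from CAT(0) by part~(2)) would produce a deficit incompatible with the Euclidean areas of the 2-cells.

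For part (2), the link condition, both directions deserve separate treatment. Assuming $X$ is locally CAT(0), an injective loop of length less than $2\pi$ in some $Lk(x,X)$ would give two distinct geodesic segments issuing from $x$ that re-meet inside an arbitrarily small CAT(0) ball around $x$, contradicting uniqueness of geodesics in that ball. Conversely, assuming no short injective loops in links, I would use the Alexandrov developing-map argument: any small geodesic triangle near $x$ can be unfolded into $\mathbb{R}^{2}$ by successively isometrically embedding each 2-cell traversed; the absence of short loops in the relevant link guarantees the unfolded triangle has a well-defined, non-overlapping comparison image with the correct side lengths, and the CAT(0) inequality then reduces to the Euclidean one.

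For part (3), this is the Cartan--Hadamard theorem in the singular setting. The plan has two stages: first show that every local geodesic in $X$ is a global geodesic, using simple connectivity to rule out homotopically nontrivial short bigons built from concatenated local geodesic segments; then verify the CAT(0) inequality for an arbitrary geodesic triangle by subdividing it into small sub-triangles, each lying in a local CAT(0) ball where the comparison inequality holds. The technical hurdle is patching these local comparison maps into a single global comparison triangle in $\mathbb{R}^{2}$; this is where simple connectivity is used a second time, to ensure the patched developing map is well defined and non-overlapping via a monotonicity statement for comparison angles under subdivision. The cited reference \cite{bh} carries out each of these three arguments in detail, and my plan is essentially to follow their presentation.
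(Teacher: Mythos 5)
Your proposal matches the paper's treatment: the authors give no argument of their own and simply cite Bridson--Haefliger \cite{bh} (II.5.34(2) for collapsibility, II.5.5--5.6 for the link condition, and the Cartan--Hadamard theorem II.4.1 for part (3)), which is exactly the substance of your plan, and your supplementary sketches are reasonable outlines of the standard proofs. The one point in your sketch I would not lean on is the claim in part (1) that the subcomplex obtained after an elementary collapse is closed and \emph{convex} in the original CAT(0) complex (this is false in general --- removing a free face from a convex region need not leave a convex one, and the reference establishes that the collapsed complex is again CAT(0) by other means), but this is immaterial since you, like the paper, ultimately defer to \cite{bh} for the details.
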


\begin{proof}
The first claim (1) is \cite{bh}, 5.34(2), while the second claim (2) is 
\cite{bh}, 5.5 and 5.6 in Chapter II.5. The last claim follows the
Cartan-Hadamard theorem (see \cite{bh}, II. 4.1).
\end{proof}

\bigskip

\begin{proof}[Proof of Theorem \protect\ref{th1.3}]
Since the universal cover of a non-positively curved complex is $\mathrm{CAT}%
(0),$ it suffices to prove that a $\mathrm{CAT}(0)$ 2-complex $X$ has strong 
$\pi _{1}$-injectivity by Lemma \ref{lem1}. By Corollary \ref{lem3}, any
collapsible 2-complex has strong $\pi _{1}$-injectivity. As a finite $%
\mathrm{CAT}(0)$ 2-complex is collapsible (see Lemma \ref{key} (1)), it has
strong $\pi _{1}$-injectivity. Suppose that a simply connected subcomplex $%
Y\subset X$ is not strongly $\pi _{1}$-injective. Let $Z\subset X$ be a
subcomplex such that $\pi _{1}(Y\cap Z)\overset{i}{\rightarrow }\pi _{1}(Z)$
is not injective. Choose $f:S^{1}\rightarrow Y\cap Z$ such that the homotopy
class $[f]\in \ker i$ is not trivial. Since $\text{Im}f$ is compact and any
homotopy $h$ between $f$ and a constant map has compact image in $Z,$ we
could choose finite subcomplexes $Y^{\prime }\subset Y$ containing $\text{Im}%
f$ and $Z^{\prime }\subset Z$ containing $\text{Im}h.$ The link condition
implies that the subcomplex $Y$ is non-positively curved (cf. Lemma \ref{key}
(2)). Since $Y$ is simply connected, the subcomplex $Y$ is $\mathrm{CAT}(0)$
by Lemma \ref{key} (3). The finite subcomplex $Y^{\prime }$ is contained in
a ball $B_{Y}(x,r)\subset Y$ of sufficiently large radius, for some point $%
x\in Y$ and sufficient large $r$. When $X$ is proper, the closed ball $%
B_{Y}(x,r)$ is compact. Since the ball $B_{Y}(x,r)$ is contractible (cf. 
\cite{bh}, II.1.4), we may choose a finite contractible subcomplex $%
Y^{\prime \prime }\subset Y$ containing $Y^{\prime }$ (for example, take $%
Y^{\prime \prime }=B_{Y}(x,r)$). By the construction, the inclusion-induced
map $\pi _{1}(Y^{\prime \prime }\cap Z^{\prime })\overset{i}{\rightarrow }%
\pi _{1}(Z^{\prime })$ is not injective. Since both $Y^{\prime \prime }$ and 
$Z^{\prime }$ are finite, we may choose a ball $B_{X}(x,r^{\prime })$ of
sufficiently large radius containing $Y^{\prime \prime }$ and $Z^{\prime }.$
Therefore, the subcomplexes $Y^{\prime \prime }$ and $Z^{\prime }$ is
contained in a finite $\mathrm{CAT}(0)$ 2-complex $X^{\prime }.$ The strong $%
\pi _{1}$-injectivity of $X^{\prime }$ implies that the map $\pi
_{1}(Y^{\prime \prime }\cap Z^{\prime })\overset{i}{\rightarrow }\pi
_{1}(Z^{\prime })$ is injective, which gives a contradiction. This finishes
the proof.
\end{proof}

\bigskip

Infinitus, Nanyang Technological University, 50 Nanyang Ave, S2B4b05,

Singapore 639798. E-mail: jifeng@ntu.edu.sg

\bigskip

Department of Mathematical Sciences, Xi'an Jiaotong-Liverpool University,
111 Ren Ai Road, Suzhou, Jiangsu, China 215123. E-mail:
Shengkui.Ye@xjtlu.edu.cn

\end{document}